\documentclass[12pt,a4paper]{article}
\usepackage{amsmath}
\usepackage{amssymb}
\usepackage{t1enc}
\usepackage[latin1]{inputenc}
\usepackage[english]{babel}
\usepackage{amsfonts}
\usepackage{latexsym}
\usepackage{bm}
\usepackage{setspace}
\usepackage[toc,page]{appendix}
\usepackage{graphicx}
\usepackage{enumerate}
\usepackage[numbers,sort&compress]{natbib}
\usepackage{tikz}
\usepackage{soul}
\usepackage[normalem]{ulem}

\usepackage{theoremref}
\usepackage{lineno}

\usepackage{mathtools}

\newtheorem{theorem}{Theorem}%[section]
\newtheorem{prop}{Proposition}
\newtheorem{lemma}{Lemma}
\newtheorem{cor}{Corollary}

\newtheorem{rem}{Remark}
\newtheorem{defn}{Definition}
\newtheorem{construction}{Construction}

\allowdisplaybreaks

\begin{document}
\title{Results on three problems on isolation of graphs \medskip\medskip
}

\author{Peter Borg\\[5mm]
Department of Mathematics \\
Faculty of Science \\
University of Malta\\
Malta\\
\texttt{peter.borg@um.edu.mt} 
\and 
Yair Caro \\ [5mm]
Department of Mathematics \\
University of Haifa--Oranim \\
Israel \\
\texttt{yacaro@kvgeva.org.il}
}

\date{}%{\today} 
\maketitle

\begin{abstract}
The graph isolation problem was introduced by Caro and Hansberg in 2015. It is a vast generalization of the classical graph domination problem and its study is expanding rapidly. In this paper, we address a number of questions that arise naturally. Let $F$ be a graph. We show that the $F$-isolating set problem is NP-complete if $F$ is connected. We investigate how the $F$-isolation number $\iota(G,F)$ of a graph $G$ is affected by the minimum degree $d$ of $G$, establishing a bounded range, in terms of $d$ and the orders of $F$ and $G$, for the largest possible value of $\iota(G,F)$ with $d$ sufficiently large. We also investigate how close $\iota(G,tF)$ is to $\iota(G,F)$, using domination and, in suitable cases, the Erd\H os--P\' osa property.
\end{abstract}

\section{Introduction} \label{Introsection}
The graph isolation problem was introduced by Caro and Hansberg in \cite{CaHa15, CaHa17}. It is a natural and appealing generalization of the classical graph domination problem \cite{C, CH, HHS, HHS2, HL, HL2}. As was the case of domination, isolation has become a very active field of investigation and is rapidly expanding in various directions. The seminal paper \cite{CaHa17} addressed this problem from various angles. It established many results and posed a number of pertinent problems, a few of which are solved in \cite{Borgcycles, BFK, Borgrsc}. It has motivated an accelerating number of results, most of which are referenced in \cite{Borgrsc, Borgisdom}. The scope of this paper is to address mainly three broad questions that are central to the study at this point in its development. The first concerns the complexity of the problem. The second is how the minimum degree of a graph affects the isolation parameter considered. The third is how an isolation parameter of a disconnected graph relates to other isolation parameters of the graph. To the best of the authors' knowledge, isolation of disconnected graphs has not yet been investigated. 

For standard terminology in graph theory, we refer the reader to \cite{West}. Most of the notation and terminology used here is defined in \cite{Borgcycles}. The set of positive integers is denoted by $\mathbb{N}$. For $n \in \{0\} \cup \mathbb{N}$, $[n]$ denotes the set $\{i \in \mathbb{N} \colon i \leq n\}$. Note that $[0]$ is the empty set $\emptyset$. Arbitrary sets and graphs are taken to be finite.  For a set $X$, ${X \choose k}$ denotes the set of $k$-element subsets of $X$. Every graph $G$ is taken to be \emph{simple}, that is, its vertex set $V(G)$ and edge set $E(G)$ satisfy $E(G) \subseteq {V(G) \choose 2}$. Unless stated otherwise, it is to be assumed that an arbitrary graph is not the \emph{null graph}, that is, its vertex set is non-empty. We may represent an edge $\{v,w\}$ by $vw$. 

If $D \subseteq V(G) = N[D]$, then $D$ is called a \emph{dominating set of $G$}. The size of a smallest dominating set of $G$ is called the \emph{domination number of $G$} and is denoted by $\gamma(G)$. If $\mathcal{F}$ is a set of graphs and $F$ is a copy of a graph in $\mathcal{F}$, then we call $F$ an \emph{$\mathcal{F}$-graph}. A subset $D$ of $V(G)$ is called an \emph{$\mathcal{F}$-isolating set of $G$} if $N[D]$ intersects the vertex sets of the $\mathcal{F}$-graphs contained by $G$. Thus, $D$ is an $\mathcal{F}$-isolating set of $G$ if and only if $G - N[D]$ contains no $\mathcal{F}$-graph. %It is to be assumed that $(\emptyset, \emptyset) \notin \mathcal{F}$.
The size of a smallest $\mathcal{F}$-isolating set of $G$ is called the \emph{$\mathcal{F}$-isolation number of $G$} and is denoted by $\iota(G, \mathcal{F})$. If $\mathcal{F} = \{F\}$, then we may replace $\mathcal{F}$ in these defined terms and notation by $F$. Clearly, $D$ is a dominating set of $G$ if and only if $D$ is a $K_1$-isolating set of $G$. Thus, $\gamma(G) = \iota(G, K_1)$.

One of the earliest domination results is the upper bound $n/2$ of Ore \cite{Ore} on the domination number of any connected $n$-vertex graph $G$ with $n \geq 2$ (see \cite{HHS}). While deleting the closed neighbourhood of a dominating set yields the graph with no vertices, deleting the closed neighbourhood of a $K_2$-isolating set yields a graph with no edges. A $K_2$-isolating set is also called a \emph{vertex-edge dominating set} \cite{BCHH,Lewis,LHHF,Peters,TJ,Z}. Caro and Hansberg~\cite{CaHa17} proved that if $G$ is a connected $n$-vertex graph with $n \geq 3$, then $\iota(G, K_2) \leq n/3$ unless $G$ is a $5$-cycle. This solved a problem in \cite{BCHH} and was proved independently by \.{Z}yli\'{n}ski \cite{Z}. One of the main problems posed by Caro and Hansberg in \cite{CaHa17} essentially was to determine a best possible upper bound on $\iota(G,K_k)$ for any $k \geq 1$. This was solved by Borg, Fenech and Kaemawichanurat~\cite{BFK}, who proved that $\iota(G, K_k) \leq n/(k+1)$ unless $G$ is a $k$-clique or $k = 2$ and $G$ is a $5$-cycle. The bound of Ore is the case $k = 1$, and the bound of Caro and Hansberg and of \.{Z}yli\'{n}ski is the case $k = 2$. The graphs attaining the bound $n/(k+1)$ on $\iota(G, K_k)$ are determined in \cite{FJKR, PX} for $k = 1$, in \cite{BG, LMS} for $k = 2$, in \cite{CCZ} for $k = 3$, and in \cite{CCZ2} for $k \geq 4$. Let $\mathcal{C}$ be the set of cycles. Solving another problem in \cite{CaHa17}, Borg \cite{Borgcycles} proved that $n/4$ is a best possible upper bound on $\iota(G,\mathcal{C})$ unless $G$ is a $3$-cycle (\cite{Borgcon} provides a stronger version). A common generalization to the bounds above was recently provided by Borg in \cite{Borgrsc}. Each of \cite{Borgrc2,BFK2,CZ,ZW2} provides a sharp upper bound on $\iota(G, K_k)$ or $\iota(G, \mathcal{C})$ in terms of the size $|E(G)|$ of $G$.

As indicated above, this paper addresses important facets of the isolation problem for general frameworks. In the next section, we show that for any connected graph $F$, the problem of determining whether $\iota(G, F) \leq t$ is NP-complete. In Section~\ref{sec:mindeg}, for any graph $F$, we establish a bounded range, in terms of the minimum degree $\delta(G)$ of $G$ and the orders of $F$ and $G$, for the largest possible value of $\iota(G,F)$ with $\delta(G)$ sufficiently large. For a set $\mathcal{F}$ of graphs and a positive integer $t$, let $t \mathcal{F}$ denote the set 
\[\{F_1 \cup \dots \cup F_t \colon F_1, \dots, F_t \mbox{ are pairwise vertex-disjoint $\mathcal{F}$-graphs}\}.\]
If $\mathcal{F} = \{F\}$, then we may represent $t\mathcal{F}$ by $tF$. In Section~\ref{tFsection}, we introduce the problem of bounding $\iota(G, t \mathcal{F})$, focusing on how $\iota(G,t\mathcal{F})$ relates to $\iota(G,\mathcal{F})$, and exhibiting in particular an interesting connection with the Erd\H os--P\' osa property \cite{EP}.

\section{Complexity of $F$-isolation} \label{sec:complexity}

Let $F$ be an arbitrary graph. Various known results on the complexity of domination problems can be generalized to complexity results for $F$-isolation problems in a universal way. The \emph{dominating set problem} (DSP) is to determine whether $\gamma(G) \leq t$ for a given graph $G$ and an integer $t$. We will refer to the problem of determining whether $\iota(G, F) \leq t$ as the \emph{$F$-isolating set problem} ($F$-ISP). It is well known that DSP is NP-complete \cite{GJ}. Suppose that $F$ is connected. In this section, we show that $F$-ISP is NP-complete. It is also known that DSP is NP-complete for planar graphs \cite{GJ}, for chordal graphs \cite{BJ} and for bipartite graphs \cite{CN}. This fact was recently used in \cite{CLWX} for establishing that $P_k$-ISP is also NP-complete for planar graphs and for chordal graphs. We generalize this by constructing a graph $C(G,F)$ (for any graph $G$) and showing that for any set $\mathcal{G}$ of graphs for which DSP is NP-complete, $F$-ISP is NP-complete for $\{C(G,F) \colon G \in \mathcal{G}\}$. 

\begin{construction}\label{const1} \emph{For any $n, k \in \mathbb{N}$ and any graphs $G$ and $F$ with $|V(G)| = n$ and $|V(F)| = k$, we construct %an \emph{$F$-corona of $G$}, denoted by 
a graph $C(G,F)$ as follows. Let $v_1, \dots, v_n$ be the distinct vertices of $G$, let $u \in V(F)$, and let $F_1, \dots, F_n$ be pairwise vertex-disjoint copies of $F$ such that for each $i \in [n]$, $V(F_i) \cap V(G) = \{v_i\}$ and $v_i$ is the vertex of $F_i$ corresponding to $u$. Let $C(G,F)$ be the graph with vertex set $\bigcup_{i = 1}^n V(F_i)$ and edge set $E(G) \cup \bigcup_{i = 1}^n E(F_i)$. }
\end{construction}

\begin{lemma} \label{FISPlemma1} For any connected graph $F$, 
\[\iota(C(G,F), F) = \gamma(G).\]
\end{lemma}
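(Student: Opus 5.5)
We prove the two inequalities separately; write $H = C(G,F)$ throughout.

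\emph{Upper bound $\iota(H,F) \leq \gamma(G)$.} Let $D$ be a smallest dominating set of $G$, so $D \subseteq V(G) \subseteq V(H)$. We claim $D$ is an $F$-isolating set of $H$. Every $v_i \in V(G)$ lies in $N_G[D] \subseteq N_H[D]$. Hence $H - N_H[D]$ contains no vertex of $G$. Its vertex set is therefore contained in $\bigcup_{i=1}^n (V(F_i) \setminus \{v_i\})$. Since the only edges of $H$ joining distinct $F_i$'s are edges of $G$, each component of $H - N_H[D]$ lies inside some $F_i - v_i$. This subgraph has $k-1$ vertices. Since $F$ is connected, any $F$-graph in $H - N_H[D]$ lies in a single component, so it would need $k$ vertices inside $F_i - v_i$, which is impossible. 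Thus $\iota(H,F) \leq |D| = \gamma(G)$.

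\emph{Lower bound $\gamma(G) \leq \iota(H,F)$.} Let $S$ be a smallest $F$-isolating set of $H$. We construct a dominating set $D'$ of $G$ with $|D'| \leq |S|$. For each $x \in S$, let $i(x)$ be the unique index with $x \in V(F_{i(x)})$ (these vertex sets partition $V(H)$), and set $D' = \{v_{i(x)} \colon x \in S\}$. Clearly $|D'| \leq |S|$.

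To see that $D'$ dominates $G$, fix $j \in [n]$. The subgraph $F_j$ is a copy of $F$ in $H$, so $N_H[S]$ meets $V(F_j)$. Hence some $x \in S$ lies in $V(F_j)$ or is adjacent to a vertex of $F_j$.
\begin{itemize}
\item If $x \in V(F_j)$, then $v_j \in D'$.
\item If $x \notin V(F_j)$ but is adjacent to some $y \in V(F_j)$, then the edge $xy$ joins two different $F_i$'s. Such an edge must be an edge of $G$, so $y = v_j$ and $x = v_{i(x)}$. Then $v_{i(x)} \in D'$ is adjacent to $v_j$ in $G$.
\end{itemize}
In either case $v_j \in N_G[D']$. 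So $D'$ is a dominating set of $G$, and $\gamma(G) \leq |S| = \iota(H,F)$.

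The main point needing care is the structural fact, used in both directions, that every edge of $H$ between distinct copies $F_i$ is an edge of $G$ between the attachment vertices $v_i$. Connectivity of $F$ is used only in the upper bound, to confine any copy of $F$ to one component of $H - N_H[D]$.
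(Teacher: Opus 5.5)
Your proof is correct and takes essentially the same approach as the paper. For the upper bound, both you and the paper confine every component of $H - N_H[D]$ to some $F_i - v_i$. For the lower bound, both project a minimum $F$-isolating set onto $D' = \{v_i \colon S \cap V(F_i) \neq \emptyset\}$ and check domination using the fact that edges between distinct copies $F_i$ are edges of $G$, which you state more explicitly than the paper does.
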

\textbf{Proof.} Let $D$ be a smallest dominating set of $G$. Since $V(G) = N_G[D] \subseteq N_{C(G,F)}[D]$, for each component $H$ of $C(G,F) - N_{C(G,F)}[D]$, we have $V(H) \subseteq V(F_i) \setminus \{v_i\}$ for some $i \in [n]$. Thus, since $F$ is connected, $D$ is an $F$-isolating set of $G$. Therefore, we have $\iota(C(G,F), F) \leq |D| = \gamma(G)$.

Now let $D$ be a smallest $F$-isolating set of $C(G,F)$. Then, $N[D] \cap V(F_i) \neq \emptyset$ for each $i \in [n]$. For each $i \in [n]$, let $D_i = \{v_i\}$ if $D \cap V(F_i) \neq \emptyset$, and let $D_i = \emptyset$ if $D \cap V(F_i) = \emptyset$. Let $D' = \bigcup_{i=1}^n D_i$. Consider any $j \in [n]$ such that $v_j \notin D'$. Then, $D \cap V(F_j) = \emptyset$. Since $N[D] \cap V(F_j) \neq \emptyset$, $v_j \in N_G[D \cap V(G)]$. Since $D \cap V(G) \subseteq D'$, $v_j \in N_G[D']$. Thus, $D'$ is a dominating set of $G$. Since $\iota(C(G,F), F) = |D| \geq |D'| \geq \gamma(G) \geq \iota(C(G,F), F)$, the result follows.~\hfill{$\Box$}  

\begin{lemma} \label{FISPlemma2} For any graph $F$, $F$-ISP is in NP.
\end{lemma}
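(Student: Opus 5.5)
To show that $F$-ISP is in NP, the plan is to use the standard certificate argument. Fix the graph $F$, with $k = |V(F)|$, and treat it as a constant: it is part of the problem, not part of the input. An instance consists of a graph $G$ with $n$ vertices and an integer $t$. The certificate is a set $D \subseteq V(G)$ with $|D| \leq t$. If $t \geq n$, the instance is trivially a yes-instance, since $D = V(G)$ works. So we may assume the certificate has size polynomial in $n$.

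The verifier works in two steps. First, it computes $N[D]$ and the graph $G' = G - N[D]$, which clearly takes $O(n^2)$ time. Second, it checks that $G'$ contains no $F$-graph, that is, no subgraph isomorphic to $F$. It does this by brute force: it enumerates every ordered $k$-tuple of distinct vertices $(w_1,\dots,w_k)$ of $G'$, of which there are at most $n^k$. For each tuple, it tests whether the map sending the $i$-th vertex of $F$ to $w_i$ carries every edge of $F$ to an edge of $G'$. Each test takes $O(k^2)$ time. Thus the total running time is $O(n^k k^2)$, which is polynomial in $n$ because $k$ is fixed. The verifier accepts exactly when no tuple passes the test. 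By the definition of an $F$-isolating set, this happens exactly when $D$ is an $F$-isolating set of $G$. Hence $\iota(G,F) \leq t$ if and only if some certificate is accepted.

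The only point that needs care is the second step: subgraph isomorphism is NP-complete when $F$ is part of the input. The argument therefore depends on $F$ being fixed, which is how $F$-ISP is defined, since $F$ is not among the inputs $G$ and $t$. With $F$ fixed, the $n^k$ enumeration is a polynomial bound and the lemma follows. The same argument works for any graph $F$; connectivity of $F$ is not needed here. It is needed only in Lemma~\ref{FISPlemma1} for the hardness direction.
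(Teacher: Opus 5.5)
Your proposal is correct and follows essentially the same route as the paper: the certificate is the set $D$, the verifier deletes $N[D]$ and checks by brute force over all $k$-vertex subsets of $G - N[D]$ (your ordered $k$-tuples amount to the same check) in $c(F)n^k$ steps, which is polynomial because $F$ is fixed. Allowing $|D| \leq t$ instead of $|D| = t$ is a harmless refinement that avoids the degenerate case $t > n$.
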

\textbf{Proof.} Let $k = |V(F)|$ and $m = |E(F)|$. Given an $n$-vertex graph $G$ and an integer $t \geq 0$, let $D \in {V(G) \choose t}$. We delete $N[D]$ from $G$. Let $G'$ be the resulting graph. Then, $D$ is an $F$-isolating set of $G$ if and only if $G'[X]$ contains no copy of $F$ for each $X \in {V(G') \choose k}$. This can checked in at most $c(F)n^k$ steps for some integer $c(F)$ depending only on $F$.~\hfill{$\Box$}

\begin{theorem} \label{FISP1} For any connected graph $F$, $F$-ISP is NP-complete.
\end{theorem}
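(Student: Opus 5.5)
The plan is a standard polynomial reduction from the dominating set problem, using the two lemmas already established. Membership of $F$-ISP in NP is exactly Lemma~\ref{FISPlemma2}: a certificate is a set $D$ of $t$ vertices, and checking that $G - N[D]$ contains no copy of $F$ takes at most $c(F)n^k$ steps, where $k = |V(F)|$ is a constant. It therefore remains to prove NP-hardness.

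For hardness, I will reduce DSP, which is NP-complete \cite{GJ}, to $F$-ISP. Given an instance $(G,t)$ of DSP with $|V(G)| = n$, I will output the instance $(C(G,F), t)$ of $F$-ISP, where $C(G,F)$ is the graph of Construction~\ref{const1}. Since $F$ is fixed, $C(G,F)$ has $nk$ vertices and $|E(G)| + n|E(F)|$ edges. It can be written down in time polynomial in $n$: copy $G$, and for each $v_i$ attach a fresh copy of $F$ identified with $v_i$ at the fixed vertex $u$.

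Correctness comes directly from Lemma~\ref{FISPlemma1}. Because $F$ is connected, $\iota(C(G,F),F) = \gamma(G)$, so $\gamma(G) \le t$ holds if and only if $\iota(C(G,F),F) \le t$. Hence a polynomial-time algorithm for $F$-ISP would give one for DSP, so $F$-ISP is NP-hard. Combined with membership in NP, this makes it NP-complete.

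Since both lemmas are already available, I do not expect a real obstacle. The only points to state explicitly are that $F$ is fixed, so $k$ is a constant and both the construction size and the verification time $c(F)n^k$ are polynomial, and that connectivity of $F$ is exactly the hypothesis Lemma~\ref{FISPlemma1} needs. The same argument restricted to any class $\mathcal{G}$ on which DSP is NP-complete gives the stronger statement that $F$-ISP is NP-complete on $\{C(G,F) \colon G \in \mathcal{G}\}$.
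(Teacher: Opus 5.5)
Your proposal is correct and matches the paper's proof: membership in NP comes from Lemma~\ref{FISPlemma2}, and hardness comes from the polynomial-time reduction $(G,t) \mapsto (C(G,F),t)$ from DSP, which is correct by Lemma~\ref{FISPlemma1}'s identity $\iota(C(G,F),F) = \gamma(G)$ for connected $F$. Your remarks on the size of $C(G,F)$ and on $k$ being a constant simply spell out what the paper says is ``clearly'' polynomial.
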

\textbf{Proof.} Consider an instance of DSP. We are given a graph $G$ and an integer $t \geq 0$, and we need to check if $\gamma(G) \leq t$. We construct $C(G,F)$, which can be clearly done in polynomial time. By Lemma~\ref{FISPlemma1}, $\gamma(G) \leq t$ if and only if $\iota(C(G,F),F) \leq t$. Together with Lemma~\ref{FISPlemma2}, this yields the result.~\hfill{$\Box$}
\\

For any set $\mathcal{G}$ of graphs, let $C(\mathcal{G},F)$ denote the set $\{C(G,F) \colon G \in \mathcal{G}\}$. By the argument for Theorem~\ref{FISP1}, we obtain the above-mentioned stronger result.

\begin{theorem} \label{FISP2} If $F$ is a connected graph, $\mathcal{G}$ is a set of graphs, and DSP is NP-complete for $\mathcal{G}$, then $F$-ISP is NP-complete for $C(\mathcal{G},F)$.
\end{theorem}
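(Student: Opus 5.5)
The plan is to reuse the reduction from the proof of Theorem~\ref{FISP1}, now restricted to inputs from $\mathcal{G}$. Membership in NP needs nothing new. By Lemma~\ref{FISPlemma2}, $F$-ISP is in NP for all graphs, so in particular for graphs in $C(\mathcal{G},F)$: a certificate is a set $D$ of size $t$, and it can be verified in at most $c(F)n^k$ steps. What remains is NP-hardness of $F$-ISP restricted to $C(\mathcal{G},F)$.

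For hardness I will reduce DSP on $\mathcal{G}$, which is NP-complete by hypothesis, to $F$-ISP on $C(\mathcal{G},F)$. Given an instance $(G,t)$ with $G\in\mathcal{G}$, I construct $C(G,F)$ as in Construction~\ref{const1}. This takes time polynomial in $|V(G)|$ for fixed $F$: the new graph has $n|V(F)|$ vertices and $|E(G)|+n|E(F)|$ edges, and $u$ is fixed once and for all. By definition of $C(\mathcal{G},F)$, the graph $C(G,F)$ lies in $C(\mathcal{G},F)$, so the output is a legitimate instance of the restricted problem. By Lemma~\ref{FISPlemma1}, which applies because $F$ is connected, $\iota(C(G,F),F)=\gamma(G)$. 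Hence $\gamma(G)\le t$ if and only if $\iota(C(G,F),F)\le t$, and the map $(G,t)\mapsto(C(G,F),t)$ is a polynomial-time many-one reduction.

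There is essentially no obstacle. The one point to state carefully is what ``$F$-ISP is NP-complete for $C(\mathcal{G},F)$'' means: the problem whose instances are pairs $(H,t)$ with $H\in C(\mathcal{G},F)$. Under that reading the reduction above lands exactly in this instance class, and combined with the NP membership from Lemma~\ref{FISPlemma2} this completes the proof.
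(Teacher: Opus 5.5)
Your proposal is correct and is essentially the paper's argument: the paper obtains this theorem by rerunning the proof of Theorem~\ref{FISP1}, i.e.\ the polynomial-time map $(G,t)\mapsto(C(G,F),t)$, justified by Lemma~\ref{FISPlemma1}, together with NP membership from Lemma~\ref{FISPlemma2}. Your remark that the reduction's output lies in $C(\mathcal{G},F)$ is the point, left implicit in the paper, that makes the restriction to $\mathcal{G}$ go through.
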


\begin{cor}[\cite{CLWX}] For any integer $k \geq 1$, \\ 
(a) $P_k$-ISP is NP-complete for planar graphs, \\
(b) $P_k$-ISP is NP-complete for for chordal graphs.
\end{cor}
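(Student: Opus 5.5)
The plan is to apply Theorem~\ref{FISP2} with $F = P_k$, which is connected for every $k \geq 1$. We take $\mathcal{G}$ to be the class of planar graphs for (a), and the class of chordal graphs for (b). DSP is NP-complete for both classes by \cite{GJ} and \cite{BJ}, so Theorem~\ref{FISP2} gives that $P_k$-ISP is NP-complete for $C(\mathcal{G},P_k)$. Since $P_k$-ISP is in NP for all graphs by Lemma~\ref{FISPlemma2}, it then suffices to check that $C(\mathcal{G},P_k)$ is contained in the class of planar graphs (respectively, chordal graphs). Hardness on a subclass implies hardness on the whole class.

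For (a), let $G$ be planar. The graph $C(G,P_k)$ is obtained from $G$ by attaching at each vertex $v_i$ a path $F_i$ that meets the rest of the graph only at $v_i$. Every block of $C(G,P_k)$ is therefore either a block of $G$ or a single edge ($K_2$) of some $F_i$. A graph is planar exactly when all its blocks are planar, so $C(G,P_k)$ is planar. More concretely, we can draw each pendant path inside a face incident to $v_i$, close enough to $v_i$ that it crosses nothing.

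For (b), let $G$ be chordal. Take any cycle $C$ in $C(G,P_k)$. Each vertex of $V(F_i)\setminus\{v_i\}$ is joined to the rest of the graph only through the tree $F_i$, whose only vertex in $G$ is the cut vertex $v_i$. Such a vertex therefore lies on no cycle, so $C$ lies entirely in $G$. Moreover, the induced subgraph of $C(G,P_k)$ on $V(C)$ coincides with the one in $G$, because the only new edges are those of the $F_i$. Hence every cycle of length at least $4$ in $C(G,P_k)$ has a chord, inherited from $G$, and $C(G,P_k)$ is chordal.

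The only genuine point to verify is this closure of the two classes under the construction. It relies on $P_k$ being a tree attached at a single vertex, which means the attachment creates no new cycles and no new faces that need crossings. The rest is a direct citation of Theorem~\ref{FISP2}.
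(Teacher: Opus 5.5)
Your proposal is correct and follows the paper's own proof: apply Theorem~\ref{FISP2} with $F = P_k$ to the classes of planar and chordal graphs, and note that $C(\mathcal{G},P_k) \subseteq \mathcal{G}$ in each case. Your block argument and your pendant-tree argument for cycles supply the justification for the closure step, which the paper simply states as clear.
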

\textbf{Proof.} Let $\mathcal{G}_1$ be the set of planar graphs. Let $\mathcal{G}_2$ be the set of chordal graphs. As mentioned above, DSP is NP-complete for both $\mathcal{G}_1$ and $\mathcal{G}_2$. By Theorem~\ref{FISP2}, $P_k$-ISP is NP-complete for both $C(\mathcal{G}_1, P_k)$ and $C(\mathcal{G}_2, P_k)$. Let $G$ be a graph. Clearly, $C(G,P_k)$ is planar if $G$ is planar, and $C(G,P_k)$ is chordal if $G$ is chordal. Thus, $C(\mathcal{G}_1, P_k) \subseteq \mathcal{G}_1$ and $C(\mathcal{G}_2, P_k) \subseteq \mathcal{G}_2$. The result follows.~\hfill{$\Box$}

\begin{cor} For any connected bipartite graph $F$, $F$-ISP is NP-complete for bipartite graphs.
\end{cor}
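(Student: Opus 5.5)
The plan is to mimic the proof of the corollary for planar and chordal graphs. We invoke Theorem~\ref{FISP2} with $\mathcal{G}$ the set of bipartite graphs, and then show that $C(\mathcal{G},F)$ is contained in the set of bipartite graphs. As noted above, DSP is NP-complete for bipartite graphs \cite{CN}. Since $F$ is connected, Theorem~\ref{FISP2} gives that $F$-ISP is NP-complete for $C(\mathcal{G},F)$.

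It therefore remains to check that $C(G,F)$ is bipartite whenever $G$ and $F$ are bipartite; this is the only step needing an argument. Let $(A,B)$ be a bipartition of $G$ and $(X,Y)$ a bipartition of $F$, where without loss of generality $u \in X$. For each $i \in [n]$, the copy $F_i$ inherits a bipartition $(X_i,Y_i)$ with $v_i \in X_i$.

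We build a $2$-colouring of $C(G,F)$ as follows. If $v_i \in A$, colour $X_i$ with colour $1$ and $Y_i$ with colour $2$. If $v_i \in B$, colour $X_i$ with colour $2$ and $Y_i$ with colour $1$. Because the $F_i$ are pairwise vertex-disjoint and cover $V(C(G,F))$, this assignment is well defined. Its restriction to $V(G)$ is exactly the colouring given by $(A,B)$.

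Every edge of $C(G,F)$ lies either in some $E(F_i)$ or in $E(G)$. An edge of $F_i$ joins $X_i$ to $Y_i$, so its ends get different colours. An edge of $G$ joins $A$ to $B$, so its ends also get different colours. Hence the colouring is proper and $C(G,F)$ is bipartite. This gives $C(\mathcal{G},F) \subseteq \mathcal{G}$, and NP-completeness for $C(\mathcal{G},F)$ transfers to NP-completeness for all bipartite graphs, since the restricted problem is a special case and membership in NP holds by Lemma~\ref{FISPlemma2}.

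I do not expect a real obstacle here. The only point needing care is choosing the orientation of each copy's bipartition so that it agrees with the side of $G$ containing $v_i$, which is handled by the case split above.
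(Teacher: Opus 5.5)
Your proposal is correct and follows the paper's proof exactly: you apply Theorem~\ref{FISP2} with $\mathcal{G}$ the set of bipartite graphs, then observe that $C(\mathcal{G},F)\subseteq\mathcal{G}$. The paper only asserts as ``clear'' that $C(G,F)$ is bipartite, and your explicit $2$-colouring, which orients each copy's bipartition according to the side of $G$ containing $v_i$, is a valid way to fill in that step.
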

\textbf{Proof.} Let $\mathcal{G}$ be the set of bipartite graphs. As mentioned above, DSP is NP-complete for $\mathcal{G}$. By Theorem~\ref{FISP2}, $F$-ISP is NP-complete for $C(\mathcal{G}, F)$. Clearly, if $G$ is a bipartite graph, then $C(G, F)$ is bipartite. Thus, $C(\mathcal{G}, F) \subseteq \mathcal{G}$. The result follows.~\hfill{$\Box$}

\section{Bounds on the $F$-isolation number in terms of the minimum degree} \label{sec:mindeg}

For any integer $d \geq 1$ and any graph $F$, let
\[\iota(d, F) = \inf\{\alpha \colon \iota(G, F) \leq \alpha |V(G)| \mbox{ for every graph $G$ with } \delta(G) \geq d\}.\]
Let $\alpha_d = \frac{1 + \ln(d+1)}{d+1}$. The following is a classical result of Arnautov \cite{Arnautov}, Lov\'{a}sz \cite{Lovasz} and Payan \cite{Payan} (see also \cite[Theorem~1.2.2]{AS}).  
\begin{theorem} \label{dombound1}
If $G$ is an $n$-vertex graph with $\delta(G) \geq d \geq 1$, then $\gamma(G) \leq \alpha_d n$.
\end{theorem}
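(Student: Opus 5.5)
We prove Theorem~\ref{dombound1} with the standard probabilistic alteration argument, as in \cite{AS}. Let $G$ be an $n$-vertex graph with $\delta(G) \geq d \geq 1$, and fix $p \in [0,1]$, to be chosen later. Form a random set $X \subseteq V(G)$ by putting each vertex into $X$ independently with probability $p$. Let $Y = V(G) \setminus N[X]$ be the set of vertices that $X$ fails to dominate. Then $D = X \cup Y$ is always a dominating set of $G$, so $\gamma(G) \leq |X| + |Y|$ for every outcome, and hence $\gamma(G) \leq \mathbb{E}[|X|] + \mathbb{E}[|Y|]$.

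Next we compute the two expectations. By linearity, $\mathbb{E}[|X|] = pn$. A vertex $v$ lies in $Y$ exactly when none of the vertices of $N[v]$ is chosen. Since $|N[v]| \geq d+1$, this happens with probability $(1-p)^{|N[v]|} \leq (1-p)^{d+1}$. Using $1 - p \leq e^{-p}$, we get $\mathbb{E}[|Y|] \leq n e^{-p(d+1)}$. Therefore
\[ \gamma(G) \leq n\left(p + e^{-p(d+1)}\right). \]

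Now we optimise. Take $p = \frac{\ln(d+1)}{d+1}$. This lies in $[0,1]$ because $\ln x \leq x$ for $x \geq 1$. Then $e^{-p(d+1)} = \frac{1}{d+1}$, so the right-hand side equals $n \cdot \frac{1+\ln(d+1)}{d+1} = \alpha_d n$. Since $\gamma(G)$ is at most this expectation, some outcome gives a dominating set of size at most $\alpha_d n$, as required.

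There is no real obstacle here. The only points needing care are that $D$ is always dominating, so the bound holds pointwise and not merely in expectation, and that the chosen $p$ is a valid probability.
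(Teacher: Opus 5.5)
Your proof is correct and is essentially the standard argument: the paper gives no proof of this classical result itself, citing Arnautov, Lov\'{a}sz and Payan and referring to Theorem~1.2.2 of \cite{AS}. That reference proves it by the same random-set-plus-alteration argument, with $p = \frac{\ln(d+1)}{d+1}$ and the estimate $(1-p)^{d+1} \leq e^{-p(d+1)}$.
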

Thus, since we trivially have $\iota(G,F) \leq \gamma(G)$ for any graph $G$, 
\begin{equation} \iota(d,F) \leq \alpha_d . \label{iotalpha}
\end{equation}
On the other hand, Caro and Hansberg \cite[Theorem~3.5 (iii)]{CaHa17} proved that if $F = K_{1,k}$, then
\begin{equation} \iota(d, F) \geq \frac{(1 + o(1))}{|V(F)|} \alpha_d , \label{CHineq}
\end{equation} 
where $o(1)$ is the standard notation representing any function of $d$ that tends to $0$ as $d \rightarrow \infty$. In this section, we show that (\ref{CHineq}) holds for any $F$.

In \cite{AW}, Alon and Wormald established the following probabilistic result as part of the proof of Theorem 1.2 in that paper.

\begin{lemma}[\cite{AW}] \label{AWlemma} For any real number $c < 1$, the expected number of dominating sets of size $cn(1 + o(1))(\ln d)/d$ of a random $d$-regular $n$-vertex graph tends to $0$ as $d \rightarrow \infty$.
\end{lemma}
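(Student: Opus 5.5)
The plan is a first-moment computation in the configuration (pairing) model for random $d$-regular graphs, followed by a transfer to simple graphs. Put $s = cn(1+o(1))(\ln d)/d$ with $c<1$ fixed. The expected number of dominating sets of size $s$ equals $\binom{n}{s}$ times the probability $p_s$ that a fixed $s$-set $S$ dominates the random graph. The goal is to show that $\ln p_s$ is much more negative than $\ln\binom{n}{s}$ is positive.

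\emph{Step 1 (counting sets).} By $\binom{n}{s} \le (en/s)^s$,
\[\ln\binom{n}{s} \le s\ln\frac{ed}{c\ln d} = (1+o(1))\,\frac{cn(\ln d)^2}{d}.\]

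\emph{Step 2 (probability that $S$ dominates).} Work in the pairing model with $nd$ points. Condition on the number $e \le sd$ of pairs joining a point of $S$ to a point outside $S$. Given $e$, by symmetry of the random matching, the $e$ endpoints outside $S$ form a uniformly random $e$-subset of the $N=(n-s)d$ points lying in $V\setminus S$. The set $S$ dominates exactly when every one of the $n-s$ outside vertices (bins of $d$ points each) receives at least one of these endpoints. For sampling without replacement, the events ``bin $j$ is hit'' are negatively associated, so the probability that all are hit is at most the product of the individual probabilities. Each individual probability equals
\[1-\binom{N-d}{e}\Big/\binom{N}{e} \le 1-\Bigl(1-\frac{e}{N-d}\Bigr)^{d}.\]
Using $e\le sd$ and $s=o(n)$, this is at most $1-\exp\bigl(-(1+o(1))sd/n\bigr) = 1-d^{-c(1+o(1))}$. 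Hence
\[p_s \le \exp\bigl(-(n-s)\,d^{-c(1+o(1))}\bigr) = \exp\bigl(-(1-o(1))\,n\,d^{-c+o(1)}\bigr).\]

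\emph{Step 3 (comparison).} Since $c<1$, we have $d^{-c+o(1)} \gg c(\ln d)^2/d$ as $d\to\infty$. The expectation in the pairing model is therefore at most $\exp\bigl(-(1-o(1))\,n\,d^{-c+o(1)}\bigr)$, which tends to $0$, and does so linearly in $n$ in the exponent.

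\emph{Step 4 (simple graphs).} A pairing is simple with probability $(1+o(1))e^{-(d^2-1)/4}$, and conditioned on simplicity it is a uniform $d$-regular graph. So the expectation for random $d$-regular graphs is at most $e^{d^2}$ times the bound from Step 3. This factor is absorbed because the exponent $n d^{-c+o(1)}$ dominates $d^2$ once $n$ is large compared with $d$, which is the regime relevant to Theorem~1.2 of \cite{AW}. If small $n$ must also be covered, I would instead replace Step 4 by a direct switching argument on simple graphs.

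I expect the main obstacle to be Step 2, namely making the dependence between the outside vertices rigorous. The plan is to handle it through the conditioning on $e$ together with negative association for sampling without replacement, as above. If that becomes awkward, the fallback is to expose the outside vertices one at a time and bound each conditional probability of being dominated by $1-(1-sd/(N-d))^d$ uniformly.
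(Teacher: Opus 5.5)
The paper gives no proof of this lemma. It is quoted from \cite{AW}, where it arises as a first-moment estimate, and your pairing-model computation is the standard way to carry out such an estimate. Your steps check out:
\begin{itemize}
\item Given $e$, symmetry under permutations of the points outside $S$ makes the outside endpoints a uniform $e$-subset.
\item The events ``bin $j$ is hit'' are increasing functions of disjoint blocks of a negatively associated vector, so the product bound is legitimate.
\item The inequality $\binom{N-d}{e}/\binom{N}{e}\ge(1-e/(N-d))^d$ holds for $e\le N-d$, which is the case here.
\end{itemize}
Note also that your pairing-model bound is uniform over all $n>d$, since $nd^{-c}\ge d^{1-c}\to\infty$.

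The one thing to correct is your closing remark about small $n$. Dividing by $\Pr(\text{simple})$ in Step~4 is the only point where $n$ must be large compared with $d$. No switching argument can remove this restriction, because the statement is false for $n$ close to $d$. If $n=d+k$, every vertex of a $d$-regular graph has exactly $k-1$ non-neighbours, so every set of at least $k$ vertices is dominating. Take $n=d+1$ (where the only graph is $K_{d+1}$), or more generally $k\le(c/2)\ln d$. Then every set of size $s\approx c\ln d$ dominates, and the count is $\binom{n}{s}\to\infty$.

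So drop the fallback and read the lemma with $n\ge n_0(d)$. For instance, $n\ge d^{3}$ suffices for your Step~4. Then $d=o(\sqrt n)$, where the known asymptotics for the pairing model give $\Pr(\text{simple})\ge e^{-d^2}$, and also $nd^{-c+o(1)}\gg d^2$ because $c<1$. This restricted version is all that Theorem~\ref{mindegthm} needs, since it uses only one $n_d$ for each $d$. The paper's Lemma~\ref{maxgamma}, however, applies the lemma to every good $(n,d)$, and so overstates it in exactly this way.
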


For any $n, d \in \mathbb{N}$, we say that $(n,d)$ is \emph{good} if $n > d$ and $nd$ is even. It is well known that a $d$-regular $n$-vertex graph exists if and only if $(n,d)$ is good. For any good $(n,d)$, let
\[\gamma(n,d) = \max\{\gamma(G) \colon G \mbox{ is a $d$-regular $n$-vertex graph}\}.\]

We restate Lemma~\ref{AWlemma} in a form that is suitable for our purpose.

\begin{lemma}\label{maxgamma} $\gamma(n, d) = (1 + o(1))\frac{\ln d}{d}n$.
\end{lemma}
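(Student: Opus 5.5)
The statement has to be read asymptotically: for every $\epsilon > 0$ there is $d_0$ such that for all $d \geq d_0$ and all sufficiently large $n$ with $(n,d)$ good,
\[(1-\epsilon)\frac{\ln d}{d}n \leq \gamma(n,d) \leq (1+\epsilon)\frac{\ln d}{d}n.\]
The two inequalities are handled separately. The upper bound comes from Theorem~\ref{dombound1}, and the lower bound from Lemma~\ref{AWlemma} via the first moment method.

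For the upper bound, let $G$ be any $d$-regular $n$-vertex graph. Then $\delta(G) = d$, so Theorem~\ref{dombound1} gives
\[\gamma(G) \leq \alpha_d n = \frac{1+\ln(d+1)}{d+1}\,n.\]
Since $\ln(d+1) = \ln d + O(1/d)$ and $d+1 = d(1+1/d)$, we have $\alpha_d = (1+o(1))\frac{\ln d}{d}$. Indeed,
\[\frac{\alpha_d}{(\ln d)/d} = \frac{d}{d+1}\cdot\frac{1+\ln(d+1)}{\ln d} \to 1 \quad \text{as } d \to \infty.\]
Taking the maximum over all such $G$ yields $\gamma(n,d) \leq (1+o(1))\frac{\ln d}{d} n$.

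For the lower bound, fix $c < 1$ and consider a uniformly random $d$-regular $n$-vertex graph $G$ (with $(n,d)$ good). Let $X$ be the number of dominating sets of $G$ of size $s = \lfloor cn(1+o(1))(\ln d)/d\rfloor$. Every superset of a dominating set is dominating, so if $\gamma(G) \leq s$ then $G$ has a dominating set of size exactly $s$, and hence $X \geq 1$. Markov's inequality gives $\Pr(\gamma(G) \leq s) \leq \mathbb{E}[X]$, and by Lemma~\ref{AWlemma} this tends to $0$ as $d \to \infty$. So for $d$ large, $\mathbb{E}[X] < 1$, which means some $d$-regular $n$-vertex graph has $\gamma(G) > s$. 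Hence
\[\gamma(n,d) \geq c(1+o(1))\frac{\ln d}{d}n.\]
Since $c<1$ is arbitrary, a diagonal choice of $c = c(d) \to 1$ slowly gives $\gamma(n,d) \geq (1-o(1))\frac{\ln d}{d} n$.

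The main obstacle is this last step: making the order of limits in Lemma~\ref{AWlemma} precise. Its convergence is stated as $d\to\infty$, and presumably holds uniformly for $n$ large relative to $d$ (as in \cite{AW}). I would first check that the Alon--Wormald estimate $\mathbb{E}[X] \leq \binom{n}{s}\Pr(\text{a fixed } s\text{-set dominates})$ is bounded by $\exp(-\Omega(n))$-type quantities uniformly in $n$. I would then choose $c(d) \to 1$ so that the exceptional $o(1)$ terms still vanish. Combining the two bounds proves the lemma.
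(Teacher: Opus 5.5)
Your proposal is correct and follows the paper's proof. The upper bound comes from Theorem~\ref{dombound1} together with $\alpha_d \sim (\ln d)/d$. The lower bound comes from Lemma~\ref{AWlemma} via the first-moment (probabilistic pigeonhole) argument and the monotonicity of dominating sets; the paper's fixed $\varepsilon$ with $c = 1-\varepsilon/2$ plays the role of your diagonal choice $c(d)\to 1$.

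Your restriction to sufficiently large $n$ is necessary, not just cautious. For instance, $\gamma(d+1,d)=1$, since $K_{d+1}$ is the only $d$-regular $(d+1)$-vertex graph, while $\frac{\ln d}{d}(d+1)\to\infty$. The paper's proof passes over this when it applies Lemma~\ref{AWlemma} to every good $(n,d)$.
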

\textbf{Proof.} Let $\varepsilon$ be a real number such that $0 < \varepsilon < 1$. Since $(1 + \ln(d+1))/(d+1) \sim (\ln d) / d$, there exists some $d_0 \in \mathbb{N}$ such that 
\begin{equation} \frac{1 + \ln(d+1)}{d+1} \leq (1 + \varepsilon)\frac{\ln d}{d} \label{AWlemma.1}
\end{equation}
for any $d \geq d_0$. Let $c = 1 - \varepsilon / 2$. Let $b$ be a function satisfying $b(d) = o(1)$. Then, for some $d_1 \in \mathbb{N}$, $|b(d)| < \varepsilon /2$ for any $d \geq d_1$.  For any good $(n,d)$, let $f(n,d) = cn(1 + b(d))(\ln d)/d$, and let $g(n,d)$ be the expected number of dominating sets of size $f(n,d)$ of a random $d$-regular $n$-vertex graph. By Lemma~\ref{AWlemma}, for some $d_2 \in \mathbb{N}$, $g(d) < 1$ for any $d \geq d_2$. Let $d \geq \max\{d_0, d_1, d_2\}$. Let $n \in \mathbb{N}$ such that $(n,d)$ is good. By the probabilistic pigeonhole principle, there exists a $d$-regular $n$-vertex graph $G$ with at most $g(n,d)$ dominating sets of size $f(n,d)$. Adding vertices to a dominating set yields another dominating set, so if a graph has no dominating set of a certain size, then it has no dominating set of a smaller size. Since $g(n,d) < 1$, $G$ has no dominating set of size at most $f(n,d)$, so $\gamma(G) > f(n,d)$. We have
\begin{align} \frac{d}{n\ln d}\gamma(G) &> \frac{d}{n\ln d}f(n,d) = (1 - \frac{\varepsilon}{2})(1 + b(d)) = 1 - \frac{\varepsilon}{2} + (1 - \frac{\varepsilon}{2})b(d) \nonumber \\
&> 1 - \frac{\varepsilon}{2} - (1 - \frac{\varepsilon}{2})\frac{\varepsilon}{2} > 1 - \varepsilon, \nonumber
\end{align}
so $\gamma(n, d) > (1 - \varepsilon)\frac{\ln d}{d}n$. By (\ref{AWlemma.1}) and Theorem~\ref{dombound1}, $\gamma(n, d) < (1 + \varepsilon)\frac{\ln d}{d}n$. The result follows.~\hfill{$\Box$}

\begin{defn} \emph{For any graphs $G$ and $F$, the \emph{Cartesian product of $G$ and $F$}, denoted by $G \Box F$, is the graph with vertex set $V(G) \times V(F)$ and edge set} \[{\bigg (} \bigcup_{v \in V(G)} \{(v,w)(v,w') \colon ww' \in E(F)\} {\bigg )} \cup {\bigg (} \bigcup_{w \in V(F)} \{(v,w)(v',w) \colon vv' \in E(G)\} {\bigg )}.\]
\end{defn}

The following is a straightforward well-known fact.

\begin{prop} \label{mindegGF} $\delta(G \Box F) = \delta(G) + \delta(F)$.
\end{prop}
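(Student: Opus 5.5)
The plan is to compute the degree of an arbitrary vertex $(v,w)$ of $G \Box F$ directly, and then to minimize over all vertices. By the definition of the Cartesian product, the neighbours of $(v,w)$ fall into two disjoint classes. The first class consists of the vertices $(v,w')$ with $ww' \in E(F)$. The second class consists of the vertices $(v',w)$ with $vv' \in E(G)$.

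These classes are disjoint because every vertex in the first class has second coordinate $w' \neq w$, while every vertex in the second class has second coordinate $w$. Also, no other edges are incident with $(v,w)$. An edge $(v,w)(v',w')$ with $v \neq v'$ and $w \neq w'$ does not occur, since each edge in the definition fixes one coordinate. Hence
\[\deg_{G \Box F}((v,w)) = \deg_G(v) + \deg_F(w).\]

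Taking the minimum over all pairs $(v,w) \in V(G) \times V(F)$ gives the result. The two coordinates can be chosen independently, so the minimum of the sum is the sum of the minima. This yields $\delta(G \Box F) = \delta(G) + \delta(F)$, with equality attained at $(v,w)$ where $v$ has minimum degree in $G$ and $w$ has minimum degree in $F$.

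There is no real obstacle here. The only point needing care is the disjointness of the two neighbour classes, which ensures that no neighbour is counted twice.
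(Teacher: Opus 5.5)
Your proof is correct: the paper offers no proof, calling the proposition a straightforward well-known fact, and your argument is the standard verification behind that remark. You split the neighbourhood of $(v,w)$ into the disjoint classes $\{(v,w') \colon ww' \in E(F)\}$ and $\{(v',w) \colon vv' \in E(G)\}$, obtain a degree equal to the degree of $v$ in $G$ plus the degree of $w$ in $F$, and then minimize over independently chosen coordinates.
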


\begin{lemma}\label{Carprod} $\iota(G \Box F, F) \geq \gamma(G)$.
\end{lemma}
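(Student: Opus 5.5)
Given an $F$-isolating set $D$ of $G \Box F$, I will project it to $V(G)$ and show that the projection is a dominating set of $G$. For each $v \in V(G)$, the fibre $F_v = \{(v,w) \colon w \in V(F)\}$ induces a copy of $F$ in $G \Box F$, via the edges $(v,w)(v,w')$ with $ww' \in E(F)$. So let $D$ be a smallest $F$-isolating set of $G \Box F$, and put
\[D' = \{v \in V(G) \colon (v,w) \in D \mbox{ for some } w \in V(F)\}.\]
Clearly $|D'| \leq |D|$. It therefore suffices to show that $D'$ is a dominating set of $G$, which gives $\iota(G \Box F, F) = |D| \geq |D'| \geq \gamma(G)$.

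Fix $v \in V(G)$. Since $G \Box F$ contains the $F$-graph induced by $F_v$ and $D$ is $F$-isolating, $N[D]$ meets $F_v$. So some $(v,w) \in F_v$ lies in $D$ or is adjacent to some $(v',w') \in D$.

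\begin{itemize}
\item If $(v,w) \in D$, then $v \in D'$.
\item If $(v,w)$ is adjacent to $(v',w') \in D$, then by the definition of the Cartesian product, either $v' = v$ (with $ww' \in E(F)$), so that $v \in D'$, or $w' = w$ and $vv' \in E(G)$, so that $v' \in D'$ and $v \in N_G[D']$.
\end{itemize}

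In every case $v \in N_G[D']$, so $D'$ dominates $G$.

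The argument is a direct projection and has no real obstacle. The only point needing care is that each fibre genuinely contains a copy of $F$, which holds by the definition of the edge set. No connectivity of $F$ is needed, since each fibre is itself an $F$-graph.
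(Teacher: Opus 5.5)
Your proposal is correct and follows essentially the same argument as the paper: project a smallest $F$-isolating set $D$ of $G \Box F$ onto $V(G)$, and use that each fibre $\{v\} \times V(F)$ induces a copy of $F$ which must meet $N[D]$. A neighbour of the fibre lies either in the same fibre or in the same $G$-layer $V(G) \times \{w\}$, so the projection dominates $G$; your explicit case split states this a little more carefully than the paper's write-up, which leaves the choice of $v$ implicit.
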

\textbf{Proof.} Let $H = G \Box F$. Let $D$ be a smallest $F$-isolating set of $H$. Let $D^* = \{v \in V(G) \colon (v,w) \in D \mbox{ for some } w \in V(F)\}$. For any $w \in V(F)$, let $G_w$ be the $G$-copy given by $H [V(G) \times \{w\}\}$. For any $v \in V(G)$, let $F_v$ be the $F$-copy given by $H [\{v\} \times V(F)\}$. Thus, $N_H[(x,y)] \cap V(F_v) \neq \emptyset$ for some $(x,y) \in D$, and hence $x \in D^*$. Suppose $v \neq x$. Since $N_H[(x,y)] = N_{F_x}[(x,y)] \cup N_{G_y}[(x,y)]$, we obtain $(v,y) \in N_{G_y}[(x,y)]$, so $v \in N[x] \subseteq N[D^*]$. We have $\iota(H, F) = |D| \geq |D^*| \geq \gamma(G)$.~\hfill{$\Box$}

\begin{theorem}\label{mindegthm} For any integer $d \geq 1$ and any graph $F$,
\[\frac{(1 + o(1))}{|V(F)|} \alpha_d \leq \iota(d, F) \leq \alpha_d . \]
\end{theorem}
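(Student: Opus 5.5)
The upper bound is already in hand: it is exactly (\ref{iotalpha}), which follows from Theorem~\ref{dombound1} and the trivial inequality $\iota(G,F) \leq \gamma(G)$. So the work is all in the lower bound. The plan is to build, for each large $d$, graphs of minimum degree at least $d$ whose $F$-isolation number is large, using Lemma~\ref{Carprod} together with the random-regular-graph estimate in Lemma~\ref{maxgamma}.

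Let $k = |V(F)|$ and $\delta(F) = \delta_F$. For a given $d$, set $d' = d - \delta_F$ and assume $d$ is large enough that $d' \geq 1$. Choose $n$ with $(n,d')$ good, and let $G$ be a $d'$-regular $n$-vertex graph with $\gamma(G) = \gamma(n,d')$. Put $H = G \Box F$. By Proposition~\ref{mindegGF}, $\delta(H) = d' + \delta_F = d$, and $H$ has $nk$ vertices. By Lemma~\ref{Carprod} and Lemma~\ref{maxgamma},
\[\iota(H,F) \geq \gamma(G) = (1+o(1))\frac{\ln d'}{d'}\, n = \frac{1+o(1)}{k}\cdot\frac{\ln d'}{d'}\,|V(H)|.\]
Hence $\iota(d,F) \geq \frac{1+o(1)}{k}\frac{\ln d'}{d'}$.

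It then remains to compare asymptotic orders. Since $\delta_F$ is a constant, $d' \sim d$, and therefore $\ln d'/d' = (1+o(1))\ln d/d$. Also $\alpha_d = (1 + \ln(d+1))/(d+1) = (1+o(1))\ln d/d$. Combining these gives $\iota(d,F) \geq \frac{1+o(1)}{k}\alpha_d$. For small $d$ (those with $d \leq \delta_F$), the $o(1)$ term absorbs everything, since the statement is only asymptotic in $d$.

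The main obstacle is bookkeeping the $o(1)$ correctly. Lemma~\ref{maxgamma} is asymptotic in $d'$ uniformly over good $n$, so I would phrase it with an explicit $\varepsilon$, as in its proof. For every $\varepsilon > 0$ there is $d_0$ such that for $d \geq d_0$ some such $G$ has $\gamma(G) > (1-\varepsilon)\frac{\ln d'}{d'} n$. This gives $\iota(d,F) \geq (1-\varepsilon)\frac{1}{k}\frac{\ln d'}{d'}$ for every $d \geq d_0$. A single good $n$ suffices, since $\iota(d,F)$ is a supremum-type quantity, and a good $n$ always exists (take $n$ even and larger than $d'$). The final step then translates $\ln d'/d'$ into $\alpha_d$ with another factor $1-\varepsilon$.
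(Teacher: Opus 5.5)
Your proof is correct and follows the paper's argument: you take an extremal regular graph $G$ from Lemma~\ref{maxgamma}, form $G \Box F$, apply Lemma~\ref{Carprod}, and use $|V(G \Box F)| = k|V(G)|$ together with $\alpha_d = (1+o(1))(\ln d)/d$. The only difference is your shift to $d' = d - \delta(F)$; the paper simply takes $G$ to be $d$-regular, since $\delta(G \Box F) = d + \delta(F) \geq d$ already meets the definition of $\iota(d,F)$, and this avoids the extra comparison of $(\ln d')/d'$ with $(\ln d)/d$.
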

\textbf{Proof.} Let $k = |V(F)|$. Let $G_d$ be a $d$-regular $n_d$-vertex graph with $\gamma(G_d) = \gamma(n_d, d)$. Let $H_d = G_d \Box F$. 
%Let $b_d = \frac{1 + \ln(d + d' + 1)}{d + d' + 1}$. 
By Proposition~\ref{mindegGF}, $\delta(H_d) = d + \gamma(F)$. By Lemmas~\ref{Carprod} and \ref{maxgamma}, $\iota(H_d, F) \geq \gamma(G_d) = (1 + o(1))\frac{\ln d}{d}n_d$. Thus,
\begin{align*} \frac{\iota(H_d,F)}{\alpha_{d} |V(H_d)|} &\geq \frac{(1 + o(1))\frac{\ln d}{d}n_d}{\frac{1 + \ln(d+1)}{d+1}kn_d} = \frac{1}{k}(1 + o(1))\frac{d+1}{d}\frac{\ln d}{1 + \ln(d+1)} \rightarrow \frac{1}{k}
\end{align*}
as $d \rightarrow \infty$. Therefore, $\iota(H_d,F) \geq \frac{1}{k}(1 + o(1))\alpha_d |V(H_d)|$, and hence $\iota(d, F) \geq \frac{(1 + o(1))}{k} \alpha_d$. Together with (\ref{iotalpha}), this yields the result.~\hfill{$\Box$}
\\

For any integer $d \geq 1$ and any graph $F$, let
\[\iota_{\rm reg}(d, F) = \inf\{\alpha \colon \iota(G, F) \leq \alpha |V(G)| \mbox{ for every regular graph $G$ with } \delta(G) \geq d\}.\]
Clearly, if $F$ is regular, then $G \Box F$ is regular. Thus, the proof of Theorem~\ref{mindegthm} yields the following stronger result for the case where $F$ is regular.

\begin{theorem}\label{mindegthmreg} For any integer $d \geq 1$ and any regular graph $F$,
\[\frac{(1 + o(1))}{|V(F)|} \alpha_d \leq \iota_{\rm reg}(d, F) \leq \iota(d, F) \leq \alpha_d . \]
\end{theorem}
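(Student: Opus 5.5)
The plan is to prove the three inequalities separately, reusing the construction from the proof of Theorem~\ref{mindegthm}.

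The middle inequality $\iota_{\rm reg}(d,F) \leq \iota(d,F)$ needs no work. Every regular graph with $\delta(G) \geq d$ is in particular a graph with $\delta(G) \geq d$. So any $\alpha$ that works for all graphs of minimum degree at least $d$ also works for the regular ones, and the infimum over the larger set of admissible $\alpha$ is at most the other. The rightmost inequality $\iota(d,F) \leq \alpha_d$ is exactly (\ref{iotalpha}), which follows from Theorem~\ref{dombound1} together with $\iota(G,F) \leq \gamma(G)$.

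The real content is the lower bound on $\iota_{\rm reg}(d,F)$. Let $F$ be $r$-regular and $k = |V(F)|$. I take $G_d$ to be a $d$-regular $n_d$-vertex graph with $\gamma(G_d) = \gamma(n_d,d)$; any good $(n_d,d)$ will do. Set $H_d = G_d \Box F$. First I check directly from the definition of the Cartesian product that $H_d$ is regular. The vertex $(v,w)$ has exactly $\deg_F(w) = r$ neighbours inside $F_v$ and $\deg_{G_d}(v) = d$ neighbours inside $G_w$, and these two neighbour sets are disjoint. So $H_d$ is $(d+r)$-regular with $d+r \geq d$, and it is admissible in the definition of $\iota_{\rm reg}(d,F)$.

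Next, Lemma~\ref{Carprod} and Lemma~\ref{maxgamma} give $\iota(H_d,F) \geq \gamma(G_d) = (1+o(1))\frac{\ln d}{d} n_d$. Since $|V(H_d)| = k n_d$ and $\alpha_d \sim (\ln d)/d$, the same computation as in the proof of Theorem~\ref{mindegthm} yields $\iota(H_d,F)/(\alpha_d |V(H_d)|) \geq \frac{1}{k}(1+o(1))$. Any admissible $\alpha$ in the definition of $\iota_{\rm reg}(d,F)$ must satisfy $\alpha \geq \iota(H_d,F)/|V(H_d)|$. Hence $\iota_{\rm reg}(d,F) \geq \frac{1+o(1)}{k}\alpha_d$.

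The only point needing care is the bookkeeping of the $o(1)$ term. The family $H_d$ is indexed by $d$, while $\iota_{\rm reg}(d,F)$ is defined with threshold $d$ and $H_d$ has degree $d+r$. This causes no problem, because the lower bound only requires one admissible regular graph with minimum degree at least $d$ for each $d$. Note also that the proof of Theorem~\ref{mindegthm} writes $\delta(H_d) = d+\gamma(F)$, which is evidently a typo for $d + \delta(F)$. In the regular case I will state $\delta(H_d) = d + r$ explicitly.
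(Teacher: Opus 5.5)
Your proposal is correct and is the paper's argument: the paper proves this theorem by noting that $G_d \Box F$ is regular when $F$ is, so the construction in the proof of Theorem~\ref{mindegthm} is admissible for $\iota_{\rm reg}(d,F)$, while the other two inequalities are immediate. You are also right that $\delta(H_d)=d+\gamma(F)$ in that proof should read $d+\delta(F)$.

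One aside is wrong, though: it is not true that any good $(n_d,d)$ will do.
\begin{itemize}
\item For $n_d=d+1$ the only choice is $G_d=K_{d+1}$, so $\gamma(G_d)=1$. This is far from $(1+o(1))\frac{\ln d}{d}(d+1)\sim\ln d$.
\item Then $H_d$ is dominated by the $k$ vertices of a single fibre $\{x\}\times V(F)$. Hence $\iota(H_d,F)/|V(H_d)|\le 1/(d+1)$, which is $o(\alpha_d/k)$, so this $H_d$ is no witness at all.
\item Lemma~\ref{maxgamma}, read literally as uniform over all good $(n,d)$, asserts more than the random-regular-graph estimate of Lemma~\ref{AWlemma} can give. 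That estimate cannot hold for every $n>d$ (take $n=d+1$), so it has to be applied with $n$ large compared with $d$.
\end{itemize}
As you observe, the lower bound needs only one admissible graph for each $d$. So take $n_d$ sufficiently large (with $n_d d$ even), and your argument goes through unchanged.
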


\section{Isolation of $t\mathcal{F}$}\label{tFsection}

Following the celebrated Erd\H os--P\' osa (EP) Theorem \cite{EP}, $\mathcal{F}$ is said to have the \emph{EP property} if there exists a function $f_{\mathcal{F}} \colon \mathbb{N} \rightarrow \mathbb{N}$ such that for every $t \in \mathbb{N}$ and every graph $G$ containing no $t \mathcal{F}$-graph, $V(G)$ has a subset $S$ such that $|S| \leq f_{\mathcal{F}}(t)$ and $G - S$ contains no $\mathcal{F}$-graph (equivalently, $S$ intersects the vertex sets of the $\mathcal{F}$-graphs contained by $G$). The EP Theorem \cite{EP} tells us that the set $\mathcal{C}$ of cycles has the EP property and that $f_{\mathcal{C}}(t) = \Theta(t \log t)$. For more on the EP property, see, for example, \cite{AGHK, CHJR, FGW, Lovasz2, RS}.

Almost all of the isolation problems treated to date concern bounding $\iota(G, \mathcal{F})$ from above for some set $\mathcal{F}$ of connected graphs. If $t \geq 2$, then each member of $t\mathcal{F}$ is a disconnected graph (having exactly $t$ components, which are members of $\mathcal{F}$). Let $\gamma(\mathcal{F})$ denote $\max\{\gamma(F) \colon F \in \mathcal{F}\}$. Trivially, $\iota(G, t\mathcal{F}) \leq \iota(G, \mathcal{F}) \leq \gamma(G)$. We show that, on the other hand, $\iota(G, t\mathcal{F})$ can be bounded from below using $\iota(G, \mathcal{F})$ and $\gamma(\mathcal{F})$, and if $\mathcal{F}$ has the EP property, then we can use $f_{\mathcal{F}}(t)$ instead of $\gamma(\mathcal{F})$.

\begin{theorem}\label{tF} For any graph $G$ and any set $\mathcal{F}$ of graphs,
\[\iota(G,\mathcal{F}) - (t-1)\gamma(\mathcal{F}) \leq \iota(G,t\mathcal{F}) \leq \iota(G,\mathcal{F}).\]
Moreover, for any $k \geq 1$ and $q \geq 1$, there exist two graphs $G$ and $H$ such that $\iota(G, tK_k) = \iota(G, K_k) = \gamma(G) = q = \iota(H, tK_k) = \iota(H, K_k) - (t-1)\gamma(K_k)$ and $\iota(H, K_k) = \gamma(H)$. 
\end{theorem}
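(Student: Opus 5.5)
The upper bound is immediate: every $t\mathcal{F}$-graph contained in $G - N[D]$ contains an $\mathcal{F}$-graph. Hence an $\mathcal{F}$-isolating set is also $t\mathcal{F}$-isolating, and $\iota(G,t\mathcal{F}) \leq \iota(G,\mathcal{F})$.

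For the lower bound, I will build an $\mathcal{F}$-isolating set from a smallest $t\mathcal{F}$-isolating set $D$ greedily. Set $D_0 = D$. For $i = 1, 2, \dots$, if $G - N[D_{i-1}]$ contains an $\mathcal{F}$-graph $F_i$, choose a smallest dominating set $S_i$ of $F_i$, so $|S_i| = \gamma(F_i) \leq \gamma(\mathcal{F})$. Then put $D_i = D_{i-1} \cup S_i$. Since $V(F_i) \subseteq N[S_i]$, the vertex sets of $F_1, \dots, F_i$ all lie in $N[D_i]$, and they are pairwise vertex-disjoint, because each $F_i$ avoids $N[D_{i-1}] \supseteq \bigcup_{j<i} V(F_j)$. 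If the process reached step $t$, then $F_1 \cup \dots \cup F_t$ would be a $t\mathcal{F}$-graph inside $G - N[D]$, contradicting the choice of $D$. So the process stops after at most $t-1$ steps, at some $D_s$ with $s \leq t-1$ that is $\mathcal{F}$-isolating. This gives $\iota(G,\mathcal{F}) \leq |D| + (t-1)\gamma(\mathcal{F})$. If $\gamma(\mathcal{F})$ is infinite or undefined the bound is vacuous, and I assume it is finite.

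For the sharpness constructions I take $\gamma(K_k) = 1$. For $G$, take $q$ vertex-disjoint copies of $K_N$ with $N \geq tk$. Then $\gamma(G) = q$. A set missing some copy leaves a $K_{tk}$, which contains $tK_k$, so every $tK_k$-isolating set meets every copy. Thus $\iota(G,tK_k) \geq q$, and all the quantities equal $q$.

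For $H$, I need $\iota(H,K_k) = \gamma(H) = q+t-1$ while $\iota(H,tK_k) = q$. Take $q$ disjoint copies of $K_{tk}$ together with $t-1$ further disjoint copies of $K_k$, all disconnected from each other. Then $\gamma(H) = q+t-1$, and every component contains $K_k$, so $\iota(H,K_k) = q+t-1$. For $tK_k$: choosing one vertex in each big clique leaves exactly $t-1$ copies of $K_k$, which contain no $tK_k$, so $\iota(H,tK_k) \leq q$. Conversely, if some big clique $Q$ is untouched by $D$, then $Q$ alone contains $tK_k$ (since $|Q| = tk$). So $D$ must meet all $q$ big cliques, giving $\iota(H,tK_k) \geq q$.

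The case $k=1$ needs a check: the big cliques are $K_t$ and the extra components are isolated vertices, and the argument still works. The main obstacle is only choosing this construction correctly; the greedy lower bound itself is routine.
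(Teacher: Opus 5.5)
Your proof is correct. The lower bound is essentially the paper's argument in greedy form. You add a minimum dominating set of each new $\mathcal{F}$-graph found in $G-N[D_{i-1}]$. The paper instead fixes a maximum family of pairwise vertex-disjoint $\mathcal{F}$-graphs in $G-N[D]$, dominates all of them at once, and notes that a surviving $\mathcal{F}$-graph would enlarge the family. Either way the key fact is that $G-N[D]$ cannot contain $t$ disjoint $\mathcal{F}$-graphs.

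The sharpness part is genuinely different. Your examples are disjoint unions of cliques, so every parameter splits over components and each check takes one line. The paper adapts Construction~1 of \cite{BFK2} to get \emph{connected} examples. There $G$ is a path $v_1\dots v_q$ with a copy of $K_{tk}$ joined to each $v_i$ by one edge. $H$ replaces the last pendant clique by one pendant $K_k$ and $t-1$ pendant copies of $K_{k+1}$ at $v_q$. The extra vertex in each $K_{k+1}$ leaves a $K_k$ behind after deleting $N[v_q]$, so each of these cliques costs its own vertex for $K_k$-isolation. On the other hand, deleting $N[v_q]$ leaves fewer than $tk$ vertices in those pendant cliques, so $v_q$ alone handles $tK_k$ there.

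The statement only asks for graphs, so your simpler construction suffices. The paper's construction buys the stronger fact that both inequalities are attained by connected graphs, the setting in which isolation bounds are usually studied, at the cost of a more delicate verification.
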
 
\textbf{Proof.} Recall that $\iota(G, t\mathcal{F}) \leq \iota(G, \mathcal{F})$. Let $D$ be a $t\mathcal{F}$-isolating set of $G$ of size $\iota(G, t\mathcal{F})$. Let $H = G - N_G[D]$. Let $\mathcal{H}$ be a largest set of pairwise vertex-disjoint $\mathcal{F}$-graphs contained by $H$. Let $F_1, \dots, F_h$ be the distinct members of $\mathcal{H}$. By the choice of $D$, $h \leq t-1$. For each $i \in [h]$, let $D_i$ be a dominating set of $F_i$ of size $\gamma(F_i)$. Let $D' = \bigcup_{i=1}^h D_i$. We have $\bigcup_{i=1}^h V(F_i) = \bigcup_{i=1}^h N_{F_i}[D_i] \subseteq \bigcup_{i=1}^h N_H[D_i] = N_H[D']$. Suppose that $H - N_H[D']$ contains an $\mathcal{F}$-graph $F$. Then, we have that $\mathcal{H} \cup \{F\}$ is a set of pairwise vertex-disjoint $\mathcal{F}$-graphs contained by $H$, which contradicts the choice of $\mathcal{H}$. Therefore, $H - N_H[D']$ contains no copy of $F$. Since $V(H - N_H[D']) = V(G) \setminus (N_G[D] \cup N_H[D']) \supseteq V(G) \setminus (N_G[D] \cup N_G[D']) = V(G - N_G[D \cup D'])$, $D \cup D'$ is an $\mathcal{F}$-isolating set of $G$. We have $\iota(G,\mathcal{F}) \leq |D| + |D'| = \iota(G, t\mathcal{F}) + \sum_{i=1}^h \gamma(F_i) \leq \iota(G, t\mathcal{F}) + h \gamma(\mathcal{F})$.

We now recall \cite[Construction~1]{BFK2} to prove the second part. Let $k \geq 1$ and $q \geq 1$. Suppose $\mathcal{F} = \{K_k\}$. Let $v_1, \dots, v_q$ be the distinct elements of a path $Q$, let $G_1, \dots, G_q$ be copies of $K_{tk}$ such that $G_1, \dots, G_q$ and $Q$ are pairwise vertex-disjoint, and for each $i \in [q]$, let $w_i \in V(G_i)$ and let $G_i'$ be the graph with $V(G_i') = \{v_i\} \cup V(G_i)$ and $E(G_i') = \{v_iw_i\} \cup E(G_i)$. Suppose that $G$ is the graph with $V(G) = \bigcup_{i=1}^q V(G_i')$ and $E(G) = E(Q) \cup \bigcup_{i=1}^q E(G_i')$. Let $D$ be a smallest $t\mathcal{F}$-isolating set of $G$. For each $i \in [q]$, $G_i$ contains a $t\mathcal{F}$-graph, and $N_G[V(G_i)] = V(G_i')$, so $D \cap V(G_i') \neq \emptyset$. Since $V(G_1'), \dots, V(G_q')$ partition $V(G)$, $|D| = \sum_{i=1}^q |D \cap V(G_i')| \geq q$. We have $q \leq |D| = \iota(G, t\mathcal{F}) \leq \iota(G, \mathcal{F}) \leq \gamma(G)$. Since $\{w_i \colon i \in [q]\}$ is a dominating set of $G$, $\iota(G, t\mathcal{F}) = \iota(G, \mathcal{F}) = \gamma(G) = q$. Now let $H_1, \dots, H_t$ be cliques such that $|V(H_1)| = k$, $|V(H_i)| = k+1$ for each $i \in [t] \setminus \{1\}$, and $H_1, \dots, H_t$ and $G$ are pairwise vertex-disjoint. For each $i \in [t]$, let $x_i \in V(H_i)$ and let $H_i'$ be the graph with $V(H_i') = \{v_q\} \cup V(H_i)$ and $E(H_i') = \{v_qx_i\} \cup E(H_i)$. Let $H$ be the graph with $V(H) = (V(G) \setminus V(G_q)) \cup \bigcup_{i=1}^t V(H_i')$ and $E(H) = (E(G) \setminus E(G_q')) \cup \bigcup_{i=1}^t E(H_i')$. We have 
\begin{equation} \mbox{$H_1 \simeq K_k$ and $N_H[V(H_1)] = \{v_q\} \cup V(H_1)$.} \label{tF1a}
\end{equation} 
For each $i \in [t] \setminus \{1\}$, 
\begin{equation} \mbox{$H_i - x_i \simeq K_k$ and $N_H[V(H_i - x_i)] = V(H_i)$.} \label{tF1b}
\end{equation} 
Let $W = V(H_1) \cup \bigcup_{i = 2}^t V(H_i - x_i)$ and $W' = \{v_q\} \cup \bigcup_{i = 1}^t V(H_i)$. Then, 
\begin{equation} \mbox{$H[W] \in t\mathcal{F}$ and $N_H[W] = W'$.} \label{tF2}
\end{equation}
Let $X$ be a smallest $\mathcal{F}$-isolating set of $H$, and let $Y$ be a smallest $t\mathcal{F}$-isolating set of $H$. By the argument for $D$, for each $i \in [q-1]$, $X \cap V(G_i') \neq \emptyset$ and $Y \cap V(G_i') \neq \emptyset$. By (\ref{tF1a}) and (\ref{tF1b}), $X \cap (\{v_q\} \cup V(H_1)) \neq \emptyset$ and $X \cap V(H_i) \neq \emptyset$ for each $i \in [t] \setminus \{1\}$. By (\ref{tF2}), $Y \cap W' \neq \emptyset$. Therefore, we have $q + t - 1 \leq |X| = \iota(H, \mathcal{F}) \leq \gamma(H)$ and $q \leq |Y| = \iota(H, t\mathcal{F})$. Since $\{w_i \colon i \in [q-1]\} \cup \{x_j \colon j \in [t]\}$ is a dominating set of $H$, $\iota(H, \mathcal{F}) = \gamma(H) = q+t-1$. Since $\{w_i \colon i \in [q-1]\} \cup \{v_q\}$ is a $t\mathcal{F}$-isolating set of $H$, $\iota(H, t\mathcal{F}) = q$. Since $\gamma(\mathcal{F}) = 1$, the result follows.~\hfill{$\Box$}

\begin{theorem}\label{tFEP} If $G$ is a graph, $\mathcal{F}$ is a set of graphs, and $\mathcal{F}$ has the EP property, then
\[\iota(G,\mathcal{F}) - f_\mathcal{F}(t) \leq \iota(G, t\mathcal{F}) \leq \iota(G, \mathcal{F}).\]
\end{theorem}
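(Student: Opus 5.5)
The upper bound $\iota(G,t\mathcal{F}) \leq \iota(G,\mathcal{F})$ is the trivial inequality already noted before Theorem~\ref{tF}. Any $\mathcal{F}$-isolating set is $t\mathcal{F}$-isolating, since every $t\mathcal{F}$-graph contains an $\mathcal{F}$-graph. So the work is in the lower bound, and I will follow the proof of Theorem~\ref{tF}, replacing the domination step by the EP property.

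Let $D$ be a $t\mathcal{F}$-isolating set of $G$ of size $\iota(G,t\mathcal{F})$, and let $H = G - N_G[D]$. By the choice of $D$, $H$ contains no $t\mathcal{F}$-graph. This uses only that a $t\mathcal{F}$-graph in $H$ would be one in $G - N_G[D]$. By the EP property applied to $H$, there is a set $S \subseteq V(H)$ with $|S| \leq f_{\mathcal{F}}(t)$ such that $H - S$ contains no $\mathcal{F}$-graph.

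I then claim that $D \cup S$ is an $\mathcal{F}$-isolating set of $G$. Since $S \subseteq N_G[S]$, we have
\[V(G - N_G[D \cup S]) = V(G) \setminus (N_G[D] \cup N_G[S]) \subseteq V(H) \setminus S = V(H - S).\]
Moreover, $G - N_G[D\cup S]$ is an induced subgraph of $H - S$, since both are induced subgraphs of $G$. Hence any $\mathcal{F}$-graph in $G - N_G[D \cup S]$ would be an $\mathcal{F}$-graph in $H - S$, which is impossible. It follows that
\[\iota(G,\mathcal{F}) \leq |D| + |S| \leq \iota(G,t\mathcal{F}) + f_{\mathcal{F}}(t),\]
which is the desired lower bound.

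There is no real obstacle here. The only points needing care are two small ones. First, the hitting set $S$ given by the EP property only needs to meet the $\mathcal{F}$-graphs, and taking closed neighbourhoods $N_G[S] \supseteq S$ only helps. Second, the containment above must be checked at the level of induced subgraphs, so that "contains no $\mathcal{F}$-graph" is inherited.
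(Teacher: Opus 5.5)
Your proposal is correct and follows the paper's proof essentially step for step. You take a smallest $t\mathcal{F}$-isolating set $D$, apply the EP property to $H = G - N[D]$ to obtain $S$ with $|S| \leq f_{\mathcal{F}}(t)$, and conclude that $D \cup S$ is $\mathcal{F}$-isolating; your explicit check that $G - N_G[D \cup S]$ is an induced subgraph of $H - S$ simply spells out the paper's one-line step that $H - N[S]$ contains no $\mathcal{F}$-graph.
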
 
\textbf{Proof.} The second inequality is given by Theorem~\ref{tF}. Let $D$ be a $t\mathcal{F}$-isolating set of $G$ of size $\iota(G, t\mathcal{F})$. Let $H = G - N[D]$. Then, $H$ contains no $t \mathcal{F}$-graph. Since $\mathcal{F}$ has the EP property, $V(H)$ has a subset $S$ such that $|S| \leq f_{\mathcal{F}}(t)$ and $H - S$ contains no $\mathcal{F}$-graph. Thus, $H-N[S]$ has no $\mathcal{F}$-graph. Consequently, $D \cup S$ is an $\mathcal{F}$-isolating set of $G$, and hence $\iota(G, \mathcal{F}) \leq |D| + |S| \leq \iota(G, t\mathcal{F}) + f_{\mathcal{F}}(t)$.~\hfill{$\Box$}
\\

Note that the proof of Theorem~\ref{tFEP} immediately raises the question of whether the bound can be improved by using $N[S]$ directly when treating $H$. We now show that this is not the case if $\mathcal{F}$ is the set $\mathcal{C}$ of cycles. 

If $S$ is a subset of $V(G)$ that intersects the vertex sets of the $\mathcal{F}$-graphs contained by $G$, then we call $S$ an \emph{$\mathcal{F}$-hitting set of $G$}. Note that $S$ is an $\mathcal{F}$-hitting set of $G$ if and only if $G-S$ contains no $\mathcal{F}$-graph. In the literature, a $\mathcal{C}$-hitting set of $G$ is called a \emph{decycling set of $G$}, and the size of a smallest decycling set of $G$ is called the \emph{decycling number of $G$} and is denoted by $\nabla(G)$. Let ${\rm S}_2(G)$ denote the graph obtained by subdividing each edge of $G$ twice, meaning that each edge $xy$ of $G$ is replaced by $xx'$, $x'y'$ and $y'y$ for some two new vertices $x'$ and $y'$. Let $\mathcal{C}(G)$ denote the set of cycles contained by $G$. Clearly, 
\begin{equation} \mathcal{C}({\rm S}_2(G)) = \{{\rm S}_2(F) \colon F \in \mathcal{C}(G)\}. \label{CS2}
\end{equation}

\begin{theorem}\label{decisol} For any graph $G$, 
\[\nabla(G) = \nabla({\rm S}_2(G)) = \iota({\rm S}_2(G), \mathcal{C}).\]
\end{theorem}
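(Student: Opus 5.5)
We prove the two equalities separately, writing $G' = {\rm S}_2(G)$. For each edge $xy$ of $G$, let $x'$ and $y'$ denote the two subdivision vertices, with $x'$ adjacent to $x$ and $y'$ adjacent to $y$. Call $x'$ the subdivision vertex of $xy$ \emph{near} $x$.

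\textbf{First equality, $\nabla(G) = \nabla(G')$.} By (\ref{CS2}), every decycling set $S$ of $G$ is also a decycling set of $G'$, since each cycle of $G'$ is ${\rm S}_2(F)$ for some $F \in \mathcal{C}(G)$ and $V(F) \subseteq V({\rm S}_2(F))$. Hence $\nabla(G') \leq \nabla(G)$. Conversely, let $S'$ be a decycling set of $G'$. Replace each subdivision vertex of $S'$ lying on edge $xy$ by one of the original endpoints, say $x$. Every cycle ${\rm S}_2(F)$ through a subdivision vertex of $xy$ passes through the whole path $x x' y' y$, so it also contains $x$. The resulting set is therefore a subset of $V(G)$, has size at most $|S'|$, and meets every cycle of $G'$. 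By (\ref{CS2}), it meets every cycle of $G$, which gives $\nabla(G) \leq \nabla(G')$.

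\textbf{Second equality, $\iota(G',\mathcal{C}) = \nabla(G')$.} Any decycling set is $\mathcal{C}$-isolating, since $G' - N[S] \subseteq G' - S$. Hence $\iota(G',\mathcal{C}) \leq \nabla(G') = \nabla(G)$. It remains to show $\nabla(G) \leq \iota(G',\mathcal{C})$. Let $D$ be a smallest $\mathcal{C}$-isolating set of $G'$. I will convert $D$ into a set $S \subseteq V(G)$ with $|S| \leq |D|$ that meets $V(F)$ for every $F \in \mathcal{C}(G)$. The rule is as follows:
\begin{itemize}
\item an original vertex $v \in D$ is kept, giving $v \in S$;
\item a subdivision vertex $x' \in D$ (on edge $xy$, near $x$) is replaced by $x$.
\end{itemize}

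Now take any cycle $F$ of $G$. Since $D$ is $\mathcal{C}$-isolating, $N_{G'}[D]$ meets $V({\rm S}_2(F))$. We check that $S$ meets $V(F)$ in each of the four ways this can happen.
\begin{itemize}
\item If some $v \in D \cap V(G)$ has $N[v]$ meeting ${\rm S}_2(F)$ in $v$ itself, then $v \in V(F)$ and $v \in S$.
\item If instead $N[v]$ meets ${\rm S}_2(F)$ only at a neighbour $v'$ of $v$, then $v'$ is the subdivision vertex near $v$ of some edge $vy$. The cycle ${\rm S}_2(F)$ passes through $v'$, so it contains the path $v v' y' y$, which forces $v \in V(F)$. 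Again $v \in S$.
\item If some $x' \in D$ on edge $xy$ has $N[x'] = \{x', x, y'\}$ meeting ${\rm S}_2(F)$, then ${\rm S}_2(F)$ contains one of $x$, $x'$, $y'$. If it contains $x'$ or $y'$, it contains the whole edge path and hence $x$.
\item In the remaining case ${\rm S}_2(F)$ contains $x$ itself.
\end{itemize}
In every case $x \in S \cap V(F)$, so $S$ is a decycling set of $G$ with $|S| \leq |D|$.

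\textbf{Main obstacle.} The main obstacle is this neighbourhood case analysis. The point of subdividing \emph{twice} is that the closed neighbourhood of any vertex reaches a cycle only through that cycle's own vertices along a path which forces the "near" original vertex onto the cycle. A single subdivision would fail, since the middle vertex of an edge is adjacent to both endpoints and the mapping would break. I would verify the four cases above carefully, using (\ref{CS2}) to translate cycles of $G'$ to cycles of $G$. Combining the two parts gives $\nabla(G) \le \iota(G',\mathcal{C}) \le \nabla(G') = \nabla(G)$.
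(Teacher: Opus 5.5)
Your proof is correct and follows essentially the paper's route: the paper phrases the replacement of each subdivision vertex $x'$ by its near endpoint $x$ as an extremal choice (a minimum set with the fewest vertices outside $V(G)$), whereas you apply the replacement directly, and the facts used are the same. Those facts are that any cycle through $x'$ or $y'$ contains the whole path $xx'y'y$, and that an original vertex's closed neighbourhood meets ${\rm S}_2(F)$ only if that vertex lies on $F$. Your case analysis for $N[x'] = \{x, x', y'\}$ is slightly more careful than the paper's: the paper asserts $N_H[x'] \subseteq V(F)$ for every cycle $F$ meeting $N_H[x']$, which fails for a cycle through $x$ that avoids $x'$, although the needed conclusion $x \in V(F)$ still holds.
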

\textbf{Proof.} Let $H = {\rm S}_2(G)$. Let $D_G$ be a smallest decycling set of $G$. Let $D_H$ be a smallest decycling set of $H$ such that $|D_H \setminus V(G)|$ is minimum. 

By (\ref{CS2}), $D_G$ is a decycling set of $H$, so $\nabla(H) \leq \nabla(G)$. Suppose $D_H \setminus V(G) \neq \emptyset$. Then, $x' \in D_H$ for some $xy \in E(G)$. We have $x \in V(F)$ for each $F \in \mathcal{C}(H)$ with $x' \in V(F)$. This gives us that $(D_H \setminus \{x'\}) \cup \{x\}$ is a decycling set of $H$, contradicting the choice of $D_H$. Therefore, $D_H \setminus V(G) = \emptyset$, and hence $D_H \subseteq V(G)$. By (\ref{CS2}), $D_H$ is a decycling set of $G$, so $\nabla(G) \leq \nabla(H)$. Together with $\nabla(H) \leq \nabla(G)$, this yields $\nabla(G) = \nabla(H)$.

Trivially, every decycling set of $H$ is a $\mathcal{C}$-isolating set of $H$, so $\iota(H, \mathcal{C}) \leq \nabla(H)$. Let $D$ be a smallest $\mathcal{C}$-isolating set of $H$ such that $|D \setminus V(G)|$ is minimum. Suppose $D \setminus V(G) \neq \emptyset$. Then, $x' \in D$ for some $xy \in E(G)$. We have $N_H[x'] = \{x, x', y'\}$. For each $F \in \mathcal{C}(H)$ with $N_H[x'] \cap V(F) \neq \emptyset$, we actually have $N_H[x'] \subseteq V(F)$.  This gives us that $(D \setminus \{x'\}) \cup \{x\}$ is a $\mathcal{C}$-isolating set of $H$, contradicting the choice of $D$. Therefore, $D \setminus V(G) = \emptyset$, and hence $D \subseteq V(G)$. Consider any $F \in \mathcal{C}(G)$. By (\ref{CS2}), ${\rm S}_2(F) \in \mathcal{C}(H)$. Suppose $D \cap V(F) = \emptyset$.  Then, for each $v \in D$, we have $v \notin V(F)$, and hence $N_H[v] \cap {\rm S}_2(F) = \emptyset$. Thus, we have $N_H[D] \cap {\rm S}_2(F) = \emptyset$, a contradiction as $D$ is a $\mathcal{C}$-isolating set of $H$. Thus, $D$ is a decycling set of $G$, and hence $\nabla(G) \leq \iota(H, \mathcal{C})$. Together with $\nabla(G) = \nabla(H)$ and $\iota(H, \mathcal{C}) \leq \nabla(H)$, this yields $\nabla(H) = \iota(H, \mathcal{C})$.~\hfill{$\Box$}
\\

Theorems~\ref{tF} and \ref{decisol} yield the following complexity results.

\begin{rem} \emph{The \emph{minimum dominating set problem} (MDSP) is to determine $\gamma(G)$ for a given graph $G$. MDSP is NP-hard \cite{GJ}. For any graph $H$, we will refer the to problem of determining $\iota(G,H)$ as the \emph{minimum $H$-isolating set problem} (Min $H$-ISP). Let $F$ be a connected graph. Let $t$ be a positive integer. Let $a = (t-1)\gamma(F)$. By Lemma~\ref{FISPlemma1} and Theorem~\ref{tF1b}, we have $\gamma(G) = \iota(C(G,F), F) \leq \iota(C(G,F), tF) + a$ and $\iota(C(G,F), tF) \leq \iota(C(G,F), F) = \gamma(G)$, so $\gamma(G) \leq \iota(C(G,F), tF) + a \leq \gamma(G) + a$.  Thus, $\iota(C(G,F), tF) + a$ approximates $\gamma(G)$. It follows by \cite[Corollary~4]{DS} (which is stated for the approximation version of the minimum set cover problem, but it is well known that this implies a similar result for the approximation version of MDSP) that Min $tF$-ISP is NP-hard.}
\end{rem}

\begin{rem} \emph{The \emph{feedback vertex set problem} (FVSP) is to determine whether $\nabla(G) \leq t$ for a given graph $G$ and an integer $t$. We will refer to the problem of determining whether $\iota(G, \mathcal{C}) \leq t$ as the \emph{$\mathcal{C}$-isolating set problem} ($\mathcal{C}$-ISP). FVSP is NP-complete \cite{Karp}. We now show that together with a straightforward generalization of Theorem~\ref{decisol}, this gives us that for any integer $r \geq 1$, $\mathcal{C}$-ISP is NP-hard for the set $\mathcal{B}_r$ of bipartite graphs whose cycles are of length $0 \bmod 2r$ (where mod is the usual modulo operation). Similarly to ${\rm S}_2(G)$, for any integer $h \geq 0$, let ${\rm S}_{h}(G)$ denote the graph obtained by subdividing each edge of $G$ $h$ times. Then, each cycle of ${\rm S}_{2r+1}(G)$ is of length $0 \bmod 2r$. A well-known result in the literature is that a graph is bipartite if and only if its cycles are of even length. Thus, ${\rm S}_{2r+1}(G) \in \mathcal{B}_r$. It is easy to see that Theorem~\ref{decisol} generalizes as follows:  for $h \geq 2$, $\nabla(G) = \nabla({\rm S}_{h}(G)) = \iota({\rm S}_{h}(G), \mathcal{C})$. Thus, $\nabla(G) \leq t$ if and only if $\iota({\rm S}_{2r+1}(G), \mathcal{C}) \leq t$.}
\end{rem}


\footnotesize
\begin{thebibliography}{}

\bibitem{AGHK} J. Ahn, J.P. Gollin, T. Huynh, O. Kwon, A coarse Erd\H os--P\' osa theorem, arXiv:2407.05883 [math.CO].

\bibitem{AS} N. Alon, J. Spencer, The Probabilistic Method, Wiley-Interscience Series in Discrete Mathematics and Optimization, third edition, John Wiley \& Sons, Inc., 2008.

\bibitem{AW} N. Alon and N. Wormald, High degree graphs contain large-star factors, in Fete of Combinatorics and Computer Science, Bolyai Society Mathematical Studies, Vol. 20, Springer, Berlin--Heidelberg, 2010, pp. 9--21.

\bibitem{Arnautov} V. I. Arnautov, Estimations of the external stability number of a graph by means of the minimal degree of vertices, Prikl. Mat. Programm. V 11 (1974), 3--8 (in Russian).

\bibitem{BJ} K.S. Booth, J.H. Johnson, Dominating sets in chordal graphs, SIAM J. Comput. 11 (1982) 191--199.

\bibitem{Borgcycles} P. Borg, Isolation of cycles, Graphs Combin. 36 (2020), 631--637.

\bibitem{Borgcon} P. Borg, Isolation of connected graphs, Discrete Appl. Math. 339 (2023), 154--165.

\bibitem{Borgisdom} P. Borg, Proof of a conjecture on isolation of graphs dominated by a vertex, Discrete Appl. Math. 371 (2025), 247-253.

\bibitem{Borgrc2} P. Borg, Isolation of regular graphs and $k$-chromatic graphs, Mediterr. J. Math. 21 (2024), paper 148.

\bibitem{Borgrsc} P. Borg, Isolation of regular graphs, stars and $k$-chromatic graphs, Discrete Math. 349 (2026), paper 114706.

\bibitem{BFK} P. Borg, K. Fenech, P. Kaemawichanurat, Isolation of $k$-cliques, Discrete Math. 343 (2020), paper 111879.

\bibitem{BFK2} P. Borg, K. Fenech, P. Kaemawichanurat, Isolation of $k$-cliques II, Discrete Math. 345 (2022), paper 112641.

\bibitem{BCHH} R. Boutrig, M. Chellali, T.W. Haynes, S.T. Hedetniemi, Vertex-edge domination in graphs, Aequ. Math. 90 (2016), 355--366.

\bibitem{BG} G. Boyer, W. Goddard, Disjoint isolating sets and graphs with maximum isolation number, Discrete Appl. Math. 356 (2024), 110--116.

\bibitem{CHJR} W. Cames van Batenburg, T. Huynh, G. Joret, J. Raymond, A tight Erd\H{o}s--P\'{o}sa function for planar minors,
Adv. Comb. (2019), paper 2.

\bibitem{CaHa15} Y. Caro and A. Hansberg, Isolation in graphs, Electronic Notes in Discrete Mathematics 50 (2015), 465--470.

\bibitem{CaHa17} Y. Caro and A. Hansberg, Partial domination - the isolation number of a graph, Filomat 31 (2017), 3925--3944.

\bibitem{CN} G.J. Chang and G.L. Nemhauser, The $k$-domination and $k$-stability problems in sun-free chordal graphs, SIAM J. Algebraic Discrete Methods 5 (1984), 332--345.

\bibitem{CCZ} S. Chen, Q. Cui, J. Zhang, A characterization of graphs with maximum cycle isolation number, Discrete Appl. Math. 366 (2025), 161--175.

\bibitem{CCZ2} S. Chen, Q. Cui, L. Zhong, A characterization of graphs with maximum $k$-clique isolation number, Discrete Math. 348 (2025), 114531.

\bibitem{CLWX} J. Chen, Y. Liang, C. Wang and S. Xu, Algorithmic aspects of $\{P_k\}$-isolation in graphs and extremal graphs for a $\{P_3\}$-isolation bound, Inf. Process. Lett. 187 (2025), paper 106521.

\bibitem{C} E.J. Cockayne, Domination of undirected graphs -- A survey, in: Lecture Notes in Mathematics, Volume 642, Springer, 1978, 141--147.

\bibitem{CH} E.J. Cockayne, S.T. Hedetniemi, Towards a theory of domination in graphs, Networks 7 (1977), 247--261.

\bibitem{CZ} Q. Cui, J. Zhang, A sharp upper bound on the cycle isolation number of graphs, Graphs Combin. 39 (2023), paper 117.

\bibitem{DS} I. Dinur, D. Steurer, Analytical approach to parallel repetition, Proceedings of the 2014 ACM Symposium on Theory of Computing, ACM Press, New York, 2014, 624--633.

\bibitem{EP} P. Erd\H os, L. P\' osa, On independent circuits contained in a graph, Canad. J. Math. 17 (1965), 347--352.

\bibitem{FJKR} J.F. Fink, M.S. Jacobson, L.F. Kinch, J. Roberts, On graphs having domination number half their order, Period. Math. Hungar. 16 (1985), 287--293.

\bibitem{FGW} S. Fiorini, G. Joret, D.R. Wood, Excluded forest minors and the Erd\H os--P\' osa property, Combin. Probab. Comput. 22 (2013), 700--721.

\bibitem{GJ} M.R. Garey, D.S. Johnson, Computers and intractability. A guide to the theory of NP-completeness, Ser. Books Math. Sci.,
W.H. Freeman and Co., San Francisco, CA, 1979.

\bibitem{HHS} T.W. Haynes, S.T. Hedetniemi, P.J. Slater, Fundamentals of Domination in Graphs, Marcel Dekker, Inc., New York, 1998.

\bibitem{HHS2} T.W. Haynes, S.T. Hedetniemi, P.J. Slater (Editors), Domination in Graphs: Advanced Topics, Marcel Dekker, Inc., New York, 1998.

\bibitem{HL} S.T. Hedetniemi, R.C. Laskar (Editors), Topics on Domination, in: Annals of Discrete Mathematics, Volume 48, North-Holland Publishing Co., Amsterdam, 1991, Reprint of Discrete Mathematics 86 (1990). %, no.~1--3.

\bibitem{HL2} S.T. Hedetniemi, R.C. Laskar, Bibliography on domination in graphs and some basic definitions of domination parameters, Discrete Math. 86 (1990), 257--277.

\bibitem{Karp} R.M. Karp, Reducibility among combinatorial problems, Complexity of Computer Computations, The IBM Research Symposia Series, Plenum Press, New York-London, 1972, pp.~85--103.

\bibitem{LMS} M. Lema\'{n}ska,  M. Mora, M.J. Souto-Salorio, Graphs with isolation number equal to one third of the order, Discrete Math. 347 (2024), paper 113903.

\bibitem{Lewis} J.R. Lewis, Vertex-edge and edge-vertex domination in graphs, Ph.D. Thesis, Clemson University, Clemson, 2007.

\bibitem{LHHF} J.R. Lewis, S.T. Hedetniemi, T.W. Haynes, G.H. Fricke, Vertex-edge domination, Util. Math. 81 (2010), 193--213.

\bibitem{Lovasz} L. Lov\'{a}sz, On decompositions of graphs, Studia Sci. Math Hungar. 1 (1966) 237--238.

\bibitem{Lovasz2} L. Lov\'{a}sz, On graphs not containing independent circuits, Mat. Lapok. 16 (1965), 289--299.

\bibitem{Ore} O. Ore, Theory of graphs, in: American Mathematical Society Colloquium Publications, Volume 38, American Mathematical Society, Providence, R.I., 1962.

\bibitem{Payan} C. Payan, Sur le nombre d'absorption d'un graphe simple, Cahiers Centre \'{E}tudes Recherche Op\'{e}r. 17 (1975), 307--317.

\bibitem{PX} C. Payan, N.H. Xuong, Domination-balanced graphs, J. Graph Theory 6 (1982), 23--32.

\bibitem{Peters} K.W. Peters, Theoretical and algorithmic results on domination and connectivity, Ph.D. Thesis, Clemson University, Clemson, 1986.

\bibitem{RS} N. Robertson, P. Seymour, Graph minors. V. Excluding a planar graph, J. Combin. Theory Ser. B 41 (1986), 92--114.

\bibitem{TJ} D.K. Thakkar, N.P. Jamvecha, About ve-domination in graphs, Ann. Pure Appl. Math. 14 (2017), 245--250.

\bibitem{West} D.B. West, Introduction to Graph Theory, second edition, Prentice Hall, 2001.

\bibitem{ZW2} G. Zhang, B. Wu, A note on the cycle isolation number of graphs, Bull. Malays. Math. Sci. Soc. 47 (2024), paper 57.

\bibitem{Z} P. \.{Z}yli\'{n}ski, Vertex-edge domination in graphs, Aequ. Math. 93 (2019), 735--742.

\end{thebibliography}
\end{document}